\newtheorem{theorem}{Theorem}[section]
\newtheorem*{theorem*}{Theorem}
\newtheorem{lemma}{Lemma}[section]
\newtheorem{proposition}{Proposition}[section]
\newtheorem{definition}[theorem]{Definition}
\newtheorem{remark}[theorem]{Remark}
\def \b {\beta}
\def\Ric{\text{Ric}}
\def\th{\theta}
\def\ve{\varepsilon}
\def\p{\partial}
\def\R{\mathbb{R}}
\def\vp{\varphi}
\def\k{\kappa}
\def\tm{\tilde{\mu}}
\def\sn{\operatorname{sn}}
\def\Ric{\operatorname{Ric}}
\def\n{\nabla}
\numberwithin{equation}{section}
\begin{document}

%\title[the first eigenvalue of the Laplacian]{Lower bounds for the first eigenvalue of the Laplacian on Quarternion-K\"ahler manifolds}

\title[Comparison results on manifolds]{Comparison results for Poisson equation with mixed  boundary condition on manifolds}

\author{Haiqing Cheng}
\address{School of Mathematical Sciences, Soochow University, Suzhou, 215006, China}
\email{chq4523@163.com}

\author{Tengfei Ma}
\address{Department of Mathematics, Nanjing University, Nanjing, 210093, China}
\email{math\_mtf@163.com}

\author{Kui Wang} \thanks{The research of the third author is supported by NSFC No.11601359} 
\address{School of Mathematical Sciences, Soochow University, Suzhou, 215006, China}
\email{kuiwang@suda.edu.cn}

%\author{Haotian Wu}%\thanks{The research of the third author is supported by ARC Grant DE180101348}
%\address{School of Mathematics and Statistics, The University of Sydney, NSW 2006, Australia}
%\email{haotian.wu@sydney.edu.au}

%\date{\today}
\subjclass[2020]{53C21, 53C42}

%35P30, 35K55, 49R05, 58C40, 58J50, 53C26, 53C55}
%\subjclass[2010]{32Q15, 35P30, 35K55, 49R05, 58C40, 58J50, 53C26, 53C55}
\keywords{Talenti's comparison, Robin boundary, Riemannian manifolds}

\begin{abstract}
In this article,  we establish a $L^1$ estimate for solutions to Poisson equation with mixed  boundary condition, on complete noncompact manifolds with nonnegative Ricci curvature and compact manifolds with positive Ricci curvature respectively.  On Riemann surfaces we obtain a Talenti-type comparison. Our results generalize  main theorems in \cite{ACNT21} to Riemannian setting,  and  Chen-Li's result \cite{CL21}  to the case of variable Robin  parameter. 
\end{abstract}

\maketitle

\section{Introduction}
Let $\Omega\subset\R^n$ be a bounded domain with nonempty smooth boundary, $\Omega^\sharp\subset\R^n$ be a round ball  with the same volume as $\Omega$, $f(x)$ be a nonnegative function on $\Omega$ and $f^\sharp$ be the Schwarz decreasing rearrangement of $f$ (see Definition \ref{sch}). Assume $u(x)$ and $v(x)$ are solutions to
$$
-\Delta u(x) =f(x), \qquad x\in \Omega,
$$
and 
$$
-\Delta v(x) =f^\sharp (x), \qquad x\in \Omega^\sharp
$$
with  Dirichlet boundary, respectively.  Talenti \cite{Ta76} proved that
\begin{align}\label{1.1}
    u^\sharp (x)\le v(x), \qquad x\in\Omega^\sharp. \end{align}
Moreover the equality occurs if and only if $f(x)$ is a radial function and $\Omega$ is a round ball, see \cite{Ke88}. The key tools of the proof are Schwarz  symmetrization and isoperimetric inequalities on manifolds.
Talenti's comparison \eqref{1.1} plays  important roles in both  partial differential equations and geometry problems.  There provide $L^\infty$ estimates for solutions to PDEs and Faber-Krahn type inequality for the first Dirichlet  eigenvalue, see  \cite{CL21, HMP16, Ke88}. Talenti's comparison was generalized to nonlinear elliptic  and parabolic equations with Dirichlet boundary condition (cf. \cite{ATL90, AFTL97, Ba76,Ta79} and references therein),  to compact Riemannian manifolds with positive Ricci curvature  \cite{CLM18}, and to noncompact manifolds with nonnegative Ricci curvature and positive asymptotic volume ratio \cite{CL21}. We also refer the reader to the excellent books \cite{Ka85, Ke06} for related topics.
 
Recently, Alvino, Nitsch and Trombetti \cite{ANT21}  studied the Poisson equation with Robin boundary condition when Robin parameter is a positive constant. They proved  estimate \eqref{1.1}  on planer domains, and a sharp $L^1$ estimate for  higher dimensions.
 These results were generalized to Riemannian manifolds \cite{CLY21},  to Robin boundary with variable Robin parameter \cite{ACNT21}, and to the
torsion problem for the Hermite operator \cite{CGNT21,DNST21}. 

The purpose of the present paper is to study  Talenti's comparison and related estimates for solutions to Poisson equation with mixed boundary condition on manifolds. In particular,  we generalize  Alvino-Chiacchio-Nitsch-Trombetti's result \cite{ACNT21} to compact manifolds with positive Ricci curvature, and to noncompact manifolds with nonnegative Ricci curvature.

To state our results, we give some notations. Let $(M, g)$ be an  $n$-dimensional complete Riemannian manifold, which is either compact with $\Ric\ge (n-1)\k$ for $\k>0$, or noncompact with nonnegative Ricci curvature and positive asymptotic volume ratio. Denote by 
\begin{align}\label{1.2}
    \theta=
    \begin{cases}
    \lim_{r\to \infty}\frac{|B(r)|}{\omega_n r^n}, & \quad \k=0,\\
    \frac{|M|}{|M_\k|}, &\quad \k>0,
    \end{cases}
\end{align}
where  $B(r)$ is a round geodesic ball of radius $r$ in $M$, $M_\k$ is the $n$ dimensional space form of  sectional curvature $\k$, $\omega_n$ is the volume of the unit ball in $\R^n$, and $|M|$ denotes the volume of $M$. It follows from  the Bishop-Gromov volume comparison that $\th\le 1$.
Let $\Omega\subset M$ be a bounded domain with nonempty smooth boundary, $f(x)$ be a  nonnegative smooth function not identically zero on $\Omega$,
and $\b(x)$ be a positive smooth  function on $\p \Omega$. We consider the following Poisson equation with mixed boundary condition
\begin{align}\label{1.3}
    \begin{cases}
    -\Delta u(x)= f(x), &\quad x\in \Omega,\\
    \frac{\p u}{\p \nu}+\b(x) u=0, & \quad x\in \p \Omega,
    \end{cases}
\end{align}
where $\Delta$ denotes the Laplace-Beltrami operator and $\nu$ denotes the outward unit normal to $\p \Omega$. Since $\b(x)>0$ and $f\ge 0$, it then follows easily from  maximum principle that the solution to \eqref{1.3} is positive on $\Omega$, see Proposition \ref{po-u}.  Let  $\Omega^{\sharp}$ be a round geodesic ball in $M_\k$ satisfying $\theta |\Omega^\sharp|=|\Omega|$, 
 $f^\sharp$ defined on $\Omega^\sharp$ be the Schwarz decreasing rearrangement of $f$, and $v(x)$ be the solution to the Schwarz decreasing rearrangement equation of \eqref{1.3}, namely
 \begin{align}\label{1.4}
    \begin{cases}
    -\Delta v(x)= f^\sharp (x), &\quad x\in \Omega^\sharp,\\
    \frac{\p v}{\p \nu}+\bar \b v=0, & \quad x\in \p \Omega^\sharp,
    \end{cases}
\end{align}
where $\bar \b$ is a positive constant defined by 
\begin{align}\label{1.5}
 \bar \b= \frac{\th |\p \Omega^\sharp|}{\int_{\p \Omega}\frac{1}{\b(x)}\, dA}.
\end{align}
Throughout the paper, $dA$  denotes the induced measure on boundary $\p \Omega$  or $\p \Omega^\sharp$.
Since the boundary value problem \eqref{1.4} is radially symmetric, then the solution $v(x)$ is radially symmetric as well. Moreover $v(x)$ is monotone decreasing along the radial direction, see Proposition \ref{prop2} below.

Our first result is concerning a $L^1$ comparison between $u(x)$ and $v(x)$. Precisely we prove
\begin{theorem}\label{thm1}
Let $(M, g)$ be an  $n$-dimensional complete Riemannian manifold, which is either compact with $\Ric\ge (n-1)\k$ for $\k>0$, or noncompact with nonnegative Ricci curvature and positive asymptotic volume ratio. 
 Let $\Omega\subset M$ be a bounded domain with nonempty smooth boundary, $f(x)$ be a nonnegative smooth  function not identically zero on $\Omega$,
and $\b(x)$ be a positive smooth  function on $\p \Omega$. Let $u(x)$ and $v(x)$ be the solutions to equations \eqref{1.3} and \eqref{1.4} respectively. If $n\ge 3$, we assume further that for all measurable $E\subset\Omega$, it holds
\begin{align}\label{1.6}
    \int_E f(x)\, dx\le \frac{|E|^{\frac{n-2}{n}}}{|\Omega|^{\frac{n-2}{n}}}\int_\Omega f(x)\, dx.
\end{align}
Then 
\begin{align}\label{1.7}
||u||_{L^1(\Omega)}\le \th ||v||_{L^1(\Omega^\sharp)}.
\end{align}
\end{theorem}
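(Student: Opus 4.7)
My plan is to establish the distribution-function inequality $\mu(t) := |\{u>t\}| \le \theta|\{v>t\}| =: \theta\nu(t)$ for every $t \ge 0$; since $u,v \ge 0$, the layer-cake formula $\|u\|_{L^1(\Omega)} = \int_0^\infty \mu(t)\,dt$ will then give \eqref{1.7} by integration in $t$. The key observation motivating this reduction is that $\mu(0) = |\Omega| = \theta\nu(0)$, so the problem reduces to a differential comparison between $\mu$ and $\theta\nu$ starting from this common initial value.

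The first ingredient is obtained by integrating the equation $-\Delta u = f$ over the superlevel set $\{u > t\}$, splitting the boundary into its interior and $\partial\Omega$ portions and using the mixed boundary condition on the latter:
$$F(t):=\int_{\{u>t\}}f\,dx = \int_{\{u=t\}\cap\Omega}|\nabla u|\,dA + \int_{\partial\Omega\cap\{u\ge t\}}\beta u\,dA.$$
Cauchy--Schwarz together with the coarea identity $-\mu'(t) = \int_{\{u=t\}}|\nabla u|^{-1}\,dA$, the pointwise bound $u\ge t$ on $\{u\ge t\}$, and the definition \eqref{1.5} of $\bar\beta$, then produce
$$F(t)\ge \frac{P_\Omega(t)^2}{-\mu'(t)} + \frac{t\bar\beta\,P_\beta(t)^2}{\theta|\partial\Omega^\sharp|},$$
where $P_\Omega(t) := |\{u=t\}\cap\Omega|$ and $P_\beta(t) := |\partial\Omega\cap\{u\ge t\}|$.

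The second ingredient is the sharp isoperimetric inequality on $M$---Levy--Gromov in the compact positive-Ricci case, and the sharp isoperimetric inequality in the noncompact nonnegative-Ricci case with positive asymptotic volume ratio $\theta$. Applied to the set $E_t = \{u>t\}\subset M$, whose reduced boundary in $M$ decomposes as $(\{u=t\}\cap\Omega)\sqcup(\partial\Omega\cap\{u\ge t\})$, these yield $P_\Omega(t)+P_\beta(t)\ge \theta\,P_\kappa(\rho(t))$, where $\rho(t)$ is the radius of the geodesic ball in $M_\kappa$ of volume $\mu(t)/\theta$. A Lagrange-multiplier minimization of the quadratic right-hand side of the previous display subject to this single linear perimeter constraint (the minimum being attained exactly at the radial profile) yields the closed differential inequality
$$F(t)\cdot\left[-\mu'(t)+\frac{\theta|\partial\Omega^\sharp|}{t\bar\beta}\right]\ge \theta^2 P_\kappa(\rho(t))^2.$$

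On $\Omega^\sharp$ the analogous computation for $v$ is an \emph{equality} (no Cauchy--Schwarz slack, no isoperimetric deficit), and at a coincidence point $\mu(t) = \theta\nu(t)$ the radii match, $\rho(t) = \rho_v(t)$. The Hardy--Littlewood rearrangement inequality gives $F(t) \le \theta G(t)$ with $G(t) := \int_{\{v>t\}}f^\sharp\,dx$, and assumption \eqref{1.6} is precisely what is needed when $n\ge 3$ to convert the Hardy--Littlewood gap $\theta G(t) - F(t)$ into a bound strong enough to absorb the subtracted boundary term $\theta|\partial\Omega^\sharp|/(t\bar\beta)$ produced by the Lagrange step. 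This forces $-\mu'(t) \ge -\theta\nu'(t)$ whenever $\mu(t) = \theta\nu(t)$, and an ODE-comparison argument from the initial value $\mu(0) = \theta\nu(0)$ then yields $\mu(t)\le \theta\nu(t)$ throughout. The main obstacle I anticipate is the bookkeeping across the critical level $t = v|_{\partial\Omega^\sharp}$, where $\nu$ transitions from the constant value $|\Omega^\sharp|$ (for smaller $t$) to strictly decreasing, together with justifying the Lagrange optimization and the ODE comparison at endpoints where the perimeters $P_\Omega$ and $P_\beta$ can degenerate.
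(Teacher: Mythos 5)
Your strategy is to establish the pointwise distribution inequality $\mu_u(t)\le\theta\mu_v(t)$ for every $t$ and then integrate. This is equivalent (via Definition \ref{sch} and \eqref{2.3}) to the full Talenti comparison $u^\sharp\le v$ on $\Omega^\sharp$, which is strictly stronger than the $L^1$ bound \eqref{1.7}. The paper does not claim this for $n\ge 3$, and it is precisely because the pointwise comparison fails in higher dimensions that Theorem \ref{thm2} is restricted to $n=2$ and $f\equiv 1$ (the same restriction appears in \cite{ANT21, ACNT21}). So the reduction you set up at the start aims at a target that is not provable under the hypotheses.

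The place where your plan cannot close is the ODE comparison. At a coincidence level $t^*$ where $\mu_u(t^*)=\theta\mu_v(t^*)$ (so the radii match and $P_\k(\rho_u)=P_\k(\rho_v)$), your two displays combine with the equality for $v$ (valid for $t>v_0$) to give
\begin{align*}
F_u(t^*)\left[-\mu_u'(t^*)+\frac{\theta|\p\Omega^\sharp|}{t^*\bar\b}\right]\ \ge\ \theta^2\,G(t^*)\,\big(-\mu_v'(t^*)\big).
\end{align*}
Hardy--Littlewood gives $F_u\le\theta G$; using this to lower-bound the right side only yields $-\mu_u'(t^*)\ \ge\ \theta\big(-\mu_v'(t^*)\big)-\frac{\theta|\p\Omega^\sharp|}{t^*\bar\b}$, which is weaker than the needed $-\mu_u'\ge-\theta\mu_v'$ by exactly the boundary term. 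That term has the wrong sign to be ``absorbed,'' and assumption \eqref{1.6} does not help here: it controls $\int_0^{\mu_u(t)}f^*$ from above, i.e.\ it shrinks $F_u$, which makes the bound on $-\mu_u'$ worse, not better. There is no pointwise-in-$t$ mechanism that cancels the boundary term.

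The paper avoids this entirely by never attempting a pointwise comparison. It keeps the boundary contribution in the form $\int_{\p\Omega_t\cap\p\Omega}\frac{1}{\b(x)u(x)}\,dA$ (rather than replacing it by the lossier $\frac{\theta|\p\Omega^\sharp|}{t\bar\b}$ as you do via $u\ge t$), multiplies \eqref{3.1} by the weight $\tilde\mu_u(t)^{\frac2n-1}$, and then integrates over $[0,\tau]$. Two integral-only facts then do the work: the Fubini identity \eqref{3.7}, $\int_0^\infty\int_{\p\Omega_t\cap\p\Omega}\frac{dA}{\b u}\,dt=\int_{\p\Omega}\frac{dA}{\b}$, which controls the \emph{total} boundary contribution even though it can be large at individual $t$; and assumption \eqref{1.6} together with the monotonicity of $a(s)$, used to bound the integrand $a^2(\tilde\mu_u(t))\tilde\mu_u(t)^{\frac2n-1}\int_0^{\mu_u(t)}f^*$ uniformly by its value at $\mu_u=|\Omega|$. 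After combining with the exact identity \eqref{3.11} for $v$ and letting $\tau\to\infty$, the leftover term $-F(\tilde\mu_u(\tau))+F(\mu_v(\tau))$ tends to $0$, giving \eqref{1.7}. For finite $\tau$ that leftover need not have a sign, which again signals that $\int_0^\tau\tilde\mu_u\le\int_0^\tau\mu_v$ (let alone $\mu_u\le\theta\mu_v$) is not being asserted. Your first ingredient (the differential inequality from Cauchy--Schwarz plus the isoperimetric inequality) matches the paper's Lemma, but you should discard the ODE-comparison step and instead integrate the Lemma directly, following the Fubini-plus-\eqref{1.6} route.
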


\begin{remark}
In this paper, we mainly focus on $L^1$ estimate \eqref{1.7} on manifolds and we then set up Theorem \ref{thm1} in  smooth case. In fact, Theorem \ref{thm1}  remains valid under the assumptions that $\Omega$ is a bounded domain with  Lipschitz boundary, $f(x)\in L^2(\Omega)$ and $\b(x)$ is a positive measurable function defined on $\p \Omega$, see \cite{ACNT21}.
\end{remark}

When $n=2$ and $f$ is a constant,  we prove a Talenti-type comparison, similarly as in \cite{ACNT21}. 
\begin{theorem}\label{thm2}
Let $(M, g)$ be a  complete Riemann surface, which is either compact with $\Ric\ge \k>0$, or noncompact with $\Ric\ge 0$ and positive asymptotic volume ratio. Let $u(x)$ and $v(x)$ be the solutions to equations \eqref{1.3} and \eqref{1.4} respectively. Suppose $f(x)\equiv 1$ and $\beta(x)>0$. Then
\begin{align}\label{1.8}
u^\sharp(x)\le v(x), \quad x \in \Omega^\sharp.
\end{align}
Moreover, the equality case of \eqref{1.8} occurs if and only if $M$ and $\Omega$ are isometric to $M_\k$ and $\Omega^\sharp$ respectively,  and $\b(x)$ is a constant on $\p \Omega$.
\end{theorem}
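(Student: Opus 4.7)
My plan is to adapt the classical Talenti symmetrization argument, as used in the mixed Robin context of \cite{ACNT21}, to the Riemannian setting by substituting the Euclidean isoperimetric inequality with the sharp two-dimensional isoperimetric inequality available on $M$: the sharp inequality $L^2 \ge 4\pi\theta A$ in the noncompact $\Ric\ge 0$ case with positive AVR $\theta$, and the Levy--Gromov comparison with spherical caps of $M_\kappa$ in the compact $\Ric\ge\kappa>0$ case. Let $\mu(t) = |\{u > t\}|$. Since $-\Delta u = 1 > 0$ and $\beta > 0$, Proposition \ref{po-u} and the strong maximum principle give $u > 0$, with $u$ attaining its maximum $u_M$ in the interior and its minimum $u_m$ on $\partial\Omega$. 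Integrating the equation over the superlevel set $\{u > t\}$ for $t\in[u_m,u_M)$ and using the Robin condition yields the key identity
\[
    \mu(t) \;=\; \int_{\{u=t\}}|\nabla u|\,d\sigma + \int_{\partial\Omega\cap\{u > t\}}\beta(x)\,u\,dA \;=:\; \phi(t) + \psi(t),
\]
while the coarea formula gives $-\mu'(t) = \int_{\{u=t\}}|\nabla u|^{-1}\,d\sigma$ for a.e.\ $t$.

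The heart of the argument is then the combination of three inequalities: (i) the interior Cauchy--Schwarz
\[
    P_{\mathrm{int}}(t)^2 \,:=\, \bigl(\mathcal{H}^1(\{u=t\}\cap\Omega)\bigr)^2 \;\le\; \phi(t)\,(-\mu'(t)),
\]
which controls the length of the interior level set; (ii) the sharp two-dimensional isoperimetric inequality on $M$ applied to $\{u>t\}$, yielding a lower bound on the total perimeter $P_{\mathrm{int}}(t)+\ell(t)$, where $\ell(t) := \mathcal{H}^1(\partial\Omega\cap\{u>t\})$; and (iii) Cauchy--Schwarz on $\partial\Omega\cap\{u>t\}$ with weights $\beta^{1/2}$ and $\beta^{-1/2}$, together with the relation $\int_{\partial\Omega}\beta^{-1} = \theta|\partial\Omega^\sharp|/\bar\beta$ from \eqref{1.5}, which links $\ell(t)$ to $\psi(t)$ through the harmonic-mean constant $\bar\beta$. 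Combining (i)--(iii) with the identity $\mu = \phi + \psi$ produces a differential inequality for $\mu_u(t)$; by direct computation, all three steps become equalities for the radial model $v$ on $\Omega^\sharp \subset M_\kappa$, so a Gronwall-type comparison gives $\mu_u(t) \le \theta\,\mu_v(t)$ for every $t$, which is equivalent to $u^\sharp \le v$ in $\Omega^\sharp$.

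For the rigidity statement of \eqref{1.8}, I trace each step backwards: equality in the isoperimetric inequality, by its rigidity part, forces $M$ to be isometric to $M_\kappa$ and $\Omega$ to be isometric to $\Omega^\sharp$; equality in the interior Cauchy--Schwarz forces $|\nabla u|$ to be constant along each level set, making $u$ radial; and equality in the boundary Cauchy--Schwarz forces $\beta(x)$ to be constant on $\partial\Omega$. The main obstacle I expect is the coupling of steps (ii) and (iii): the isoperimetric inequality only controls the \emph{sum} $P_{\mathrm{int}} + \ell$, while the two Cauchy--Schwarz bounds separately control $P_{\mathrm{int}}$ in terms of $\phi$ and $\ell$ in terms of $\psi$, so combining them into a single clean ODE inequality for $\mu_u$ requires both the precise normalization of $\bar\beta$ in \eqref{1.5} and careful bookkeeping of the two regimes $t < u_m$ (where $\{u>t\} = \Omega$ and the identity yields only the global boundary condition $u_m \le v_m$) and $t\ge u_m$ (where all three inequalities must be simultaneously invoked).
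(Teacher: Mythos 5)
Your proposal follows essentially the same template as the paper: integrate the PDE over superlevel sets to get a level-set flux identity, bound the perimeter of $\{u>t\}$ from below by the sharp 2D isoperimetric inequality on $M$, bound the flux terms via Cauchy--Schwarz, use Fubini together with the normalization of $\bar\beta$ in \eqref{1.5} to convert the accumulated boundary integral $\int_0^\infty\int_{\partial\Omega\cap\{u>t\}}\frac{1}{\beta u}\,dA\,dt$ into $\int_{\partial\Omega}\beta^{-1}\,dA$, and then compare with the model $v$ where every step is an equality. The rigidity analysis also matches the paper's: equality in the isoperimetric step forces $\Omega\cong\Omega^\sharp$ and $M\cong M_\kappa$, equality in the remaining Cauchy--Schwarz inequalities forces $u$ radial and $\beta$ constant. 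The packaging differs slightly but not substantively: the paper applies a \emph{single} Cauchy--Schwarz to $g$ and $1/g$ over the full reduced boundary of $\{u>t\}$ (with $g=|\nabla u|$ inside and $g=\beta u$ on $\partial\Omega$), obtaining \eqref{3.4} in one shot, and then the comparison $\mu_u\le\theta\mu_v$ drops out by integrating \eqref{3.13} over $[0,\tau]$ and subtracting the analogous identity \eqref{3.15} for $v$ --- no Gronwall argument is needed. Your proposal of \emph{separate} Cauchy--Schwarz on the interior part and the boundary part is equivalent to the single one after an AM--GM recombination, so the obstacle you flag (the isoperimetric inequality controls only $P_{\mathrm{int}}+\ell$ while your two bounds control them separately) is actually benign, though you pay for it with an extra step.

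One imprecision to correct: the boundary Cauchy--Schwarz must be applied with weights $(\beta u)^{\pm 1/2}$, not $\beta^{\pm 1/2}$. With $\beta^{\pm 1/2}$ you would get $\ell(t)^2\le\int_{\partial\Omega\cap\{u>t\}}\beta\cdot\int_{\partial\Omega\cap\{u>t\}}\beta^{-1}$, which involves neither $\psi(t)=\int_{\partial\Omega\cap\{u>t\}}\beta u\,dA$ nor a quantity Fubini can collapse in $t$. The useful estimate is $\ell(t)^2\le\psi(t)\int_{\partial\Omega\cap\{u>t\}}\frac{1}{\beta u}\,dA$, after which $\int_0^\infty\int_{\partial\Omega\cap\{u>t\}}\frac{1}{\beta u}\,dA\,dt=\int_{\partial\Omega}\frac{1}{\beta}\,dA$ is exactly the quantity appearing in \eqref{1.5}. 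Also, in the compact case $\kappa>0$ the isoperimetric profile contributes a factor $a^2(\mu_u(t)/\theta)$ that is not constant (unlike $\kappa=0$); you need the monotonicity $a(\mu_u(t)/\theta)\le a(|\Omega^\sharp|)$ before integrating in $t$, as the paper does in \eqref{3.13}.
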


The rest of this paper  is  organized  as  follows.   In  Section  \ref{sect2}, we  recall the Schwarz decreasing  rearrangement and isoperimetric inequalities on manifolds, and give some properties on solutions to Poisson equation with Robin boundary condition.  In Section \ref{sect3}, we prove Theorem 1.1 and Theorem 1.2.

\section{Preliminaries}\label{sect2}
\subsection{Schwarz decreasing rearrangements.} 
Let $(M,g)$ be an  $n$-dimensional complete Riemannian manifold with $\Ric\ge (n-1)\k$, which is either  noncompact with $\k=0$ and positive asymptotic volume ratio, or compact with $\k>0$. Let $M_\k$ be the $n$-dimensional space form of constant sectional curvature $\k$. 
 Let $\Omega$ be a bounded  domain in $M$ and $\Omega^\sharp$ be a round geodesic ball in $M_\k$ with volume 
$|\Omega|/\th$, where $\th\in(0,1]$ is  a constant defined by \eqref{1.2}.

We recall the definitions and properties of the Schwarz decreasing rearrangement of non-negative functions on manifolds, see also  \cite[Section 2]{CL21} and \cite[Section 2]{CLM18}.
\begin{definition}\label{sch}
Let $h(x)$ be a nonnegative measurable function on $\Omega$. Denote by 
$\Omega_{h,t}=\{x\in \Omega: h(x)>t\}$
and 
$
\mu_h(t)=|\Omega_{h,t}|
$,
the decreasing rearrangement $h^*$ of $h$ is defined by
\begin{align}\label{2.1}
    h^*(s)=\begin{cases}
    \operatorname{ess} \sup_{x\in \Omega} \, h(x), & \quad s=0,\\
    \inf\{t\ge 0: \mu_h(t)<s\}, & \quad s>0,
    \end{cases}
\end{align}
for $s\in [0, |\Omega|]$.  The Schwarz decreasing rearrangement of $h(x)$ is defined by 
\begin{align}\label{2.2}
    h^\sharp(x)=h^*\big(\th \omega_n r(x)^n\big), \quad x\in \Omega^\sharp,
\end{align}
where $r(x)$ is the distance function from the center of $\Omega^\sharp$ in $M_\k$, and $\omega_n$ is the volume of unit ball in $\R^n$.
\end{definition}
It follows from \eqref{2.1} and \eqref{2.2} that
\begin{align}\label{2.3}
    \mu_h(t)=\th \mu_{h^\sharp}(t)
\end{align}
for $t\ge 0$. The Fubini's theorem gives that for   $h\in L^p(\Omega)$ and $p\ge 1$ it holds
\begin{align*}
    \int_\Omega h^p(x)\, dx=\int_0^{|\Omega|} (h^*)^p(s)\, ds=\th \int_{\Omega^\sharp} (h^\sharp)^p(x)\, dx.
\end{align*}
For any nonnegative functions $f(x)$ and $g(x)$,  the Hardy-Littlewood inequality gives
\begin{align}\label{2.4}
    \int_\Omega f(x) g(x)\,dx\le \int_0^{|\Omega|} f^*(s) g^*(s) \, ds,
\end{align}
and taking $g(x)$  as the characteristic function of $\Omega_{h,t}$ in above inequality yields
\begin{align}\label{2.5}
\int_{\Omega_{h,t}} f(x)\,dx\le \int_0^{\mu_h(t)} f^*(s)\,ds.    
\end{align}

\subsection{Isoperimetric inequalities on manifolds.} To prove Theorem \ref{thm1} and Theorem \ref{thm2}, we require the following isoperimetric inequality on manifolds with Ricci curvature bounded from below.
 \begin{theorem}
Let $M$, $\th$, $\Omega$ and $\Omega^\sharp$ be the same as in Theorem \ref{thm1}. It holds
\begin{align}\label{2.6}
    |\p \Omega|\ge \theta  |\p \Omega^\sharp|,
\end{align}
where $|\p \Omega|$ denotes the $(n-1)$-dimensional area of $\p \Omega$. Moreover the equality case of \eqref{2.6} occurs if and only if $\Omega$ is isometric to $\Omega^\sharp$.
 \end{theorem}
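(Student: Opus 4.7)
The plan is to split by curvature regime and in each case invoke a sharp isoperimetric inequality from the literature; the inequality \eqref{2.6} is then a routine rearrangement of constants, and equality follows from the rigidity statements of those results.

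First, in the compact case with $\Ric\ge(n-1)\k$, $\k>0$, I would appeal to the classical L\'{e}vy--Gromov isoperimetric inequality: for every smooth bounded $\Omega\subset M$,
\[
\frac{|\p\Omega|}{|M|}\ge \frac{|\p B|}{|M_\k|},
\]
where $B\subset M_\k$ is the geodesic ball satisfying $|B|/|M_\k|=|\Omega|/|M|$. Since $|M|=\th\,|M_\k|$, this forces $|B|=|\Omega|/\th=|\Omega^\sharp|$, hence $B=\Omega^\sharp$; multiplying through by $|M|$ yields $|\p\Omega|\ge\th\,|\p\Omega^\sharp|$.

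Next, in the noncompact case with $\Ric\ge 0$ and asymptotic volume ratio $\th>0$, I would apply Brendle's sharp isoperimetric inequality,
\[
|\p\Omega|\ge n\omega_n^{1/n}\th^{1/n}|\Omega|^{\frac{n-1}{n}}.
\]
Since $\Omega^\sharp\subset\R^n$ is the Euclidean ball of volume $|\Omega|/\th$, one directly computes $|\p\Omega^\sharp|=n\omega_n^{1/n}(|\Omega|/\th)^{(n-1)/n}$, so $\th\,|\p\Omega^\sharp|=\th^{1/n}n\omega_n^{1/n}|\Omega|^{(n-1)/n}$, matching Brendle's bound.

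For the equality statement, the rigidity portion of L\'{e}vy--Gromov (as sharpened by Morgan--Johnson, and in the nonsmooth setting by Cavalletti--Mondino) yields that $\Omega$ is isometric to a geodesic ball in $M_\k$ of the prescribed volume, i.e.\ $\Omega\cong\Omega^\sharp$; in the noncompact case the rigidity in Brendle's theorem forces $\Omega$ to be isometric to a Euclidean ball, again $\Omega\cong\Omega^\sharp$. The main obstacle, were one to prove \eqref{2.6} from scratch rather than cite, would be establishing Brendle's sharp profile via the ABP method in the noncompact setting; the L\'{e}vy--Gromov side, while classical, also rests on genuinely nontrivial comparison geometry, so the entire content of the theorem sits in those two external results.
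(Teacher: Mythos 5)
Your proposal matches the paper's approach exactly: the paper also does not prove \eqref{2.6} from scratch but cites the L\'evy--Gromov isoperimetric inequality (via Gromov and \cite{NW16}) in the compact $\k>0$ case and Brendle's sharp isoperimetric inequality (with Agostiniani--Fogagnolo--Mazzieri noted for $n=3$) in the noncompact $\k=0$ case, with the identity $|M|=\th|M_\k|$ and the volume normalization $|\Omega^\sharp|=|\Omega|/\th$ doing the bookkeeping. Your unwinding of the constants in both regimes and your attribution of the rigidity statements are correct, so this is the same argument, merely spelled out in slightly more detail than the paper bothers to.
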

When $\k>0$,  inequality \eqref{2.6} was known as L\'evy-Gromov  isoperimetric inequality \cite{Gr99}, 
see also \cite[Theorem 2.1]{NW16}. When $M$ is noncompact and $\k= 0$,  inequality \eqref{2.6} was proved  by Agostiniani, Fogagnolo and Mazzieri \cite{AFM20} when $n=3$, and  by Brendle \cite{Bre21} for all dimensions.  Brendle  proved in \cite{Bre21} that the isoperimetric inequality \eqref{2.6} also holds true when $\Omega$ is an $n$-dimensional compact minimal submanifold of  $M$ of dimension $n+2$  with nonnegative sectional curvature as well.

\subsection {Poisson equation with Robin boundary} 

In this subsection, we collect some known
facts about  solutions to Poisson equation with Robin boundary condition.
\begin{proposition}\label{po-u}
Let $u(x)$ be the solution to \eqref{1.3}. Assume $f(x)$ is nonnegative and  not identically zero on $\Omega$, and $\b(x)>0$ on $\p \Omega$. Then we have
\begin{align*}
    u(x)>0
\end{align*}
for all $x\in \Omega$.
\end{proposition}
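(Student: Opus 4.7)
The plan is to deduce strict positivity of $u$ from two applications of the strong minimum principle combined with Hopf's boundary point lemma, first to rule out negative values, then to rule out interior zeros. Since $f\ge 0$, the equation $-\Delta u=f$ says that $u$ is a supersolution of the Laplace--Beltrami operator, so $u$ cannot attain a non-strict interior minimum without being constant. The Robin condition $\tfrac{\partial u}{\partial \nu}+\beta(x)u=0$ with $\beta>0$ is what prevents a negative minimum from being attained on $\partial\Omega$.

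First, I would show $u\ge 0$ on $\overline{\Omega}$. Suppose for contradiction that $m:=\min_{\overline{\Omega}} u<0$, attained at some $x_0\in\overline{\Omega}$. If $x_0\in\Omega$, the strong minimum principle applied to the supersolution $u$ forces $u\equiv m$, whence $f=-\Delta u\equiv 0$, contradicting $f\not\equiv 0$. Hence $x_0\in\partial\Omega$, and Hopf's lemma yields $\tfrac{\partial u}{\partial\nu}(x_0)<0$ strictly; combined with $\beta(x_0)>0$ and $u(x_0)=m<0$, this gives $\tfrac{\partial u}{\partial\nu}(x_0)+\beta(x_0)u(x_0)<0$, contradicting the boundary condition.

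Next, to upgrade $u\ge 0$ to $u>0$ in $\Omega$, suppose $u(x_1)=0$ for some $x_1\in\Omega$. Then $u$ attains its (non-negative) minimum at an interior point, so again by the strong minimum principle applied to the supersolution $u$, one obtains $u\equiv 0$ on $\overline{\Omega}$. This in turn forces $f=-\Delta u\equiv 0$, contradicting the assumption that $f$ is not identically zero. Therefore $u>0$ throughout $\Omega$.

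There is no real obstacle here: the only ingredients are the standard elliptic strong minimum principle and Hopf's boundary point lemma, both of which transfer verbatim from the Euclidean setting to the Riemannian one since $\Delta$ is a uniformly elliptic second-order operator on the smooth domain $\Omega$ and $\partial\Omega$ is smooth (so the interior sphere condition is automatic). In particular, no curvature hypothesis on $(M,g)$ is used in this proposition; it is purely a consequence of the Robin sign condition $\beta>0$ and of $f\ge 0$, $f\not\equiv 0$.
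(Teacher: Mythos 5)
Your proof is correct, but it takes a genuinely different route from the paper. You argue pointwise via the strong minimum principle together with Hopf's boundary point lemma: a negative interior minimum forces $u$ constant (hence $f\equiv 0$), a negative boundary minimum gives $\tfrac{\partial u}{\partial\nu}(x_0)<0$ by Hopf, which contradicts the Robin condition since $\beta(x_0)u(x_0)<0$, and an interior zero is again ruled out by the strong minimum principle. The paper instead proceeds by an energy argument: it tests the weak formulation against the negative part $\bar u=\tfrac12(u-|u|)$, shows
$\int_\Omega \langle\nabla\bar u,\nabla u\rangle\,dx = -\int_{\partial\Omega}\beta u\bar u\,dA + \int_\Omega \bar u f\,dx\le 0$, identifies the left side with $\int_\Omega|\nabla\bar u|^2\,dx$, and concludes $\bar u\equiv 0$, i.e.\ $u\ge 0$; strict positivity then follows from the strong maximum principle exactly as in your last step. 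The two arguments buy slightly different things: your version is the classical elliptic one and is shorter to state, but it leans on Hopf's lemma, which needs $u\in C^1(\overline\Omega)$ and the interior sphere condition (automatic here since $\partial\Omega$ is smooth). The paper's energy argument avoids Hopf entirely and therefore extends with no change to weak $H^1$ solutions under merely Lipschitz boundary and $f\in L^2$, $\beta$ measurable, which is exactly the generality alluded to in the remark following Theorem \ref{thm1}. One small point worth making explicit in your write-up: before invoking Hopf at the boundary minimum you should record that, by the strong minimum principle, $u>m$ in $\Omega$ (you implicitly get this from ruling out interior minima), since Hopf's lemma requires the boundary minimum to be strict relative to the interior.
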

\begin{proof}
Letting $\displaystyle \bar u(x)=\frac 1 2 (u(x)-|u(x)|)$ and using integration by parts, we estimate that
\begin{align}\label{2.7}
    \int_\Omega \langle \n \bar u, \n u\rangle\, dx=&\int_{\p \Omega}\bar u \frac{\p u}{\p \nu}\, dA-\int_{\Omega} \bar u \Delta u\, dx\\
    =&-\int_{\p \Omega} \b u\bar u\, dA+\int_\Omega \bar u f\, dx\nonumber \\
    \le & 0,\nonumber
\end{align}
where we used  equation \eqref{1.3} in the second equality and nonnegativity of $f$ in the inequality.  Observing that $$\int_\Omega \langle \n \bar u,\n u\rangle\, dx=\int_\Omega |\n \bar u|^2\, dx, $$ 
then we have
$$
\int_\Omega |\n \bar u|^2\, dx=0.
$$
So $\bar u(x)$ is a constant on $\Omega$ and inequality \eqref{2.7} holds as an equality, hence $\bar u(x)=0$, equivalently $u(x)\ge 0$. Since $u(x)$ is a supharmonic function by equation \eqref{1.3} and $f$ is not identically zero, then we conclude from strong maximum principle that $u(x)>0$ on $\Omega$.
\end{proof}
\begin{proposition}\label{prop2}
Let $v(x)$ be the solution to \eqref{1.4}, rewritten as  $\vp(r(x))$. If $f(x)$ is non-negative and not identically zero on $\Omega$,  and $\bar \b>0$, then
\begin{align}\label{2.8}
    \vp'(r)< 0
\end{align}
for $r\in(0, R_0)$. Where $\Omega^\sharp=B(R_0)$, and $R_0\le \frac{\pi}{\sqrt{\k}}$ if $\k>0$ in  view of the Myers theorem.
\end{proposition}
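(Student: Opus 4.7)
The plan is to exploit the radial symmetry of problem \eqref{1.4} and reduce the statement to a simple ODE identity obtained from the divergence theorem.

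First I would observe that since $\Omega^\sharp$ is a geodesic ball in the space form $M_\kappa$, the datum $f^\sharp$ is radial by construction, and the Robin coefficient $\bar\beta$ is a constant. Uniqueness of the solution to \eqref{1.4} (which follows from the Robin energy inequality used in Proposition \ref{po-u}) then forces $v$ to be radial, so writing $v(x)=\varphi(r(x))$ is legitimate. On $M_\kappa$ the geodesic sphere $\partial B(r)$ has area $A(r)$ depending only on $r$ (with $A(r)>0$ for $r\in(0,R_0)$, and $R_0\le \pi/\sqrt{\kappa}$ if $\kappa>0$ by Myers' theorem).

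Next, I would apply the divergence theorem to $v$ on the subdomain $B(r)\subset \Omega^\sharp$. Since $v$ is radial and smooth,
\begin{equation*}
\int_{B(r)} \Delta v \, dx \;=\; \int_{\partial B(r)} \frac{\partial v}{\partial \nu}\, dA \;=\; A(r)\,\varphi'(r),
\end{equation*}
while the equation $-\Delta v = f^\sharp$ gives $\int_{B(r)} \Delta v\, dx = -\int_{B(r)} f^\sharp(x)\, dx$. Combining,
\begin{equation*}
\varphi'(r) \;=\; -\frac{1}{A(r)}\int_{B(r)} f^\sharp(x)\, dx, \qquad r\in(0,R_0).
\end{equation*}
This is the key identity; the rest is a sign argument.

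Finally, I would verify that the integral on the right is strictly positive for every $r\in(0,R_0)$. Since $f$ is nonnegative and not identically zero on $\Omega$, the rearrangement identity yields $\int_{\Omega^\sharp} f^\sharp\, dx = \theta^{-1}\int_\Omega f\, dx > 0$, so $f^\sharp\not\equiv 0$. Because $f^\sharp$ is radial and non-increasing in $r$, there exists $r_0\in(0,R_0]$ such that $f^\sharp>0$ on $B(r_0)$; consequently $\int_{B(r)} f^\sharp\, dx>0$ for every $r\in(0,R_0)$. Together with $A(r)>0$ on $(0,R_0)$, this yields $\varphi'(r)<0$ on $(0,R_0)$.

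I do not foresee any serious obstacle here; the only slightly delicate point is ruling out the possibility that $f^\sharp$ vanishes on a neighborhood of the origin, and this is immediate from the monotonicity of the Schwarz rearrangement. No curvature comparison or isoperimetric input is needed for this step, which is purely a statement on the model space $M_\kappa$.
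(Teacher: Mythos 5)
Your proposal is correct and is essentially the same argument as in the paper: the paper writes the equation in radial ODE form and integrates $(\sn_\kappa^{n-1}\varphi')' = -\sn_\kappa^{n-1}h \le 0$ from $0$ to $r$, which is precisely the divergence-theorem identity $\varphi'(r) = -A(r)^{-1}\int_{B(r)} f^\sharp\,dx$ that you derive, and then both proofs conclude via the same observation that $f^\sharp$ is nonincreasing and not identically zero, hence strictly positive near the origin.
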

\begin{proof}
We rewrite $f^\sharp(x)$ as $h(r(x))$ and denote by 
$$
\sn_\k(r)=\begin{cases}
\frac{\sin \sqrt{\k}r}{\sqrt{\k}}, & \k>0,\\
r, &\k=0,
\end{cases}
$$
then equation \eqref{1.4} is equivalent to
$$
\vp''(r)+(n-1)\frac{\sn_\k'(r)}{\sn_\k (r)}\vp'(r)=-h(r).
$$
Since  $h(r)\ge 0$, we then have
$$
(\sn_\k^{n-1}(r)\vp'(r))'=-\sn_\k^{n-1}(r) h(r)\le 0
$$
for $r\in(0,R_0)$. 
Since $h(r)$ is monotone nonincreasing in $(0,R_0)$ and not identically zero, then there exists a $\ve_0<R_0$ such that
$h(r)>0$ for $r<\ve_0$. Hence $\vp'(r)<0$ for all $r\in (0, R_0)$.
\end{proof}

\section{Proofs of Theorem \ref{thm1} and Theorem \ref{thm2}}\label{sect3}
In this section, we will prove the main theorems.
For simplicity, we rewrite $\Omega_{u,t}$  as $\Omega_t$, and $\Omega^\sharp_{v,t}$ as $\Omega^\sharp_t$ for short.  For $s\in (0,|\Omega^\sharp|)$, we denote
\begin{align*}
  a(s)=\frac{s^\frac{n-1}{n}}{|\p B_s|},   
 \end{align*} 
 where $B_s$ is a round geodesic ball in $M_\k$ with volume $s$. It is easily checked that $a(s)=n^{-1}\omega_n^{-1/n}$ if $\k=0$, and $a(s)$ is monotone increasing in $s$ if $\k>0$.  Using isoperimetric inequality \eqref{2.6}, we prove the following lemma,  which  will be used later. 
\begin{lemma}
Under the hypotheses of  Theorem \ref{thm1}, we have
\begin{align}\label{3.1}
\th^2 (\frac{\mu_u(t)}{\th})^{\frac{2n-2}{n}}\le  a^2\big(\frac{\mu_u(t)}{\th}\big)\Big(-\mu_u'(t)+ \int_{\p\Omega_t\cap \p \Omega} \frac{1}{\b (x) u(x)}\, dA\Big)\int_{0}^{\mu_u(t)} f^*(s)\, ds,
\end{align}
and 
\begin{align}\label{3.2}
 \mu_v^{\frac{2n-2}{n}}(t)= a^2\big(\mu_v(t)\big) \Big(-\mu_v'(t)+\int_{\p \Omega^\sharp_t\cap \p \Omega^\sharp} \frac{1}{\bar \b v(x)}\, dA\Big)\int_{0}^{\mu_v(t)} (f^\sharp)^*(s)\, ds
\end{align}
 for a.e. $t>0$.
\end{lemma}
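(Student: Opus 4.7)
The plan is to prove both formulas by a standard three-ingredient scheme: integrate the equation over the level sets of the solution, apply the Cauchy--Schwarz inequality to the split-boundary perimeter, and close the estimate using the isoperimetric inequality \eqref{2.6}, the Hardy--Littlewood inequality \eqref{2.5}, and the coarea formula. I first focus on \eqref{3.1}. For a.e. $t\in(0,\sup_{\Omega}u)$, Sard's theorem ensures that $\p\Omega_t$ is a smooth hypersurface; integrating $-\Delta u=f$ over $\Omega_t$, splitting $\p\Omega_t=(\p\Omega_t\cap\Omega)\cup(\p\Omega_t\cap\p\Omega)$ in the divergence theorem, and using $-\p u/\p\nu=|\n u|$ on the interior piece together with the Robin condition $\p u/\p\nu=-\b u$ on the boundary piece produces
\[
\int_{\Omega_t}f\,dx=\int_{\p\Omega_t\cap\Omega}|\n u|\,dA+\int_{\p\Omega_t\cap\p\Omega}\b u\,dA.
\]

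Next I would bound $|\p\Omega_t|^2$ by writing $|\p\Omega_t|$ as the sum of $\int_{\p\Omega_t\cap\Omega}|\n u|^{1/2}|\n u|^{-1/2}\,dA$ and $\int_{\p\Omega_t\cap\p\Omega}(\b u)^{1/2}(\b u)^{-1/2}\,dA$, and applying Cauchy--Schwarz to the combined two-piece sum:
\[
|\p\Omega_t|^2\le\Big(\int_{\p\Omega_t\cap\Omega}|\n u|\,dA+\int_{\p\Omega_t\cap\p\Omega}\b u\,dA\Big)\Big(\int_{\p\Omega_t\cap\Omega}\frac{dA}{|\n u|}+\int_{\p\Omega_t\cap\p\Omega}\frac{dA}{\b u}\Big).
\]
The first factor equals the flux identity from the previous step and is bounded by $\int_0^{\mu_u(t)}f^*(s)\,ds$ through Hardy--Littlewood \eqref{2.5}. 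The coarea formula gives $-\mu_u'(t)=\int_{\p\Omega_t\cap\Omega}dA/|\n u|$ for a.e. $t$. Finally, the isoperimetric inequality \eqref{2.6} applied to $\Omega_t$, whose symmetrization has volume $\mu_u(t)/\th$, yields $|\p\Omega_t|\ge\th|\p B_{\mu_u(t)/\th}|=\th(\mu_u(t)/\th)^{(n-1)/n}/a(\mu_u(t)/\th)$. Squaring this lower bound on $|\p\Omega_t|^2$, combining with the Cauchy--Schwarz upper bound, and rearranging gives \eqref{3.1}.

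The identity \eqref{3.2} follows from the same sequence of steps, but every inequality is saturated. By Proposition \ref{prop2}, $v$ is strictly radially decreasing, so each level set $\Omega^\sharp_t$ is a geodesic ball in $M_\k$, and $\p\Omega^\sharp_t\cap\p\Omega^\sharp$ is either empty or the entire boundary $\p\Omega^\sharp$. On a spherical interior level set $|\n v|$ is constant, and on $\p\Omega^\sharp$ both $\bar\b$ and $v$ are constant, so Cauchy--Schwarz becomes an equality; the isoperimetric inequality \eqref{2.6} is an equality on geodesic balls in $M_\k$; and Hardy--Littlewood \eqref{2.5} is an equality since $f^\sharp$ already equals its own decreasing rearrangement. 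The most delicate point is handling the boundary decomposition and the coarea formula rigorously: the coarea identity $-\mu_u'(t)=\int_{\p\Omega_t\cap\Omega}dA/|\n u|$ only accounts for the interior portion of the level set, while the boundary contribution enters separately through $\int_{\p\Omega_t\cap\p\Omega}dA/(\b u)$; justifying this split for a.e. $t$ relies on Sard's theorem together with the smoothness of $u$ and $\b$.
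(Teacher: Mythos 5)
Your proof is correct and follows essentially the same route as the paper: split $\p\Omega_t$ into its interior and boundary pieces, obtain the flux identity $\int_{\Omega_t}f=\int_{\p\Omega_t\cap\Omega}|\n u|+\int_{\p\Omega_t\cap\p\Omega}\b u$, apply Cauchy--Schwarz (the paper's H\"older step with the auxiliary function $g$), then invoke the coarea formula, Hardy--Littlewood \eqref{2.5}, and the isoperimetric inequality \eqref{2.6}, with every step saturating for the radial solution $v$. The only cosmetic difference is that the paper packages the two boundary pieces into a single function $g$ before applying H\"older, whereas you keep the two-term sums explicit, but this is the same inequality.
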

\begin{proof}
By  the Morse-Sard theorem, we have
\begin{align*}
    \p \Omega_{t}=\{x\in \Omega: u(x)=t\}\cup\{x\in \p \Omega: u(x)\ge t\}
\end{align*}
for a.e. $t>0$. Let
\begin{align*}
    g(x)=\begin{cases}
    |\n u|, & x\in \p \Omega_t\cap \Omega,\\
    -\frac{\p u}{\p \nu}, & x\in \p \Omega_t\cap \p \Omega.
    \end{cases}
\end{align*}
Applying the divergence theorem and equation \eqref{1.3}, we derive that
\begin{align} \label{3.3}
    \int_{\p\Omega_t} g(x)\, dA=\int_{\p\Omega_t} -\frac {\p u}{\p \nu}\, dA=-\int_{\Omega_t} \Delta u \,dx=\int_{\Omega_t} f(x)\, dx\le \int_0^{\mu_u(t)} f^*(s)\, ds,
\end{align}
where we have used  \eqref{2.5} in the inequality.
Using the H\"older inequality, we estimate 
\begin{align}\label{3.4}
 |\p \Omega_t|^2
 \le & \int_{\p \Omega_t} g(x)\, dA \int_{\p \Omega_t}\frac 1 {g(x)}\, dA\\
 \le &\int_0^{\mu_u(t)} f^*(s)\, ds \Big(\int_{\p\Omega_t\cap \Omega} \frac 1 {|\n u|}\, dA+\int_{\p\Omega_t\cap \p \Omega} \frac{1}{\b (x) u(x)}\, dA\Big)\nonumber\\
= &\int_0^{\mu_u(t)} f^*(s)\, ds \Big(-\mu_u'(t)+ \int_{\p\Omega_t\cap \p \Omega} \frac{1}{\b(x)   u(x)}\, dA\Big),\nonumber
\end{align}
where we have used inequality \eqref{3.3} in the second inequality and the coarea formula in the equality.

On the other hand,  the isoperimetric inequality \eqref{2.6} yields
\begin{align}\label{3.5}
    |\p \Omega_t|\ge \th |\p (\Omega_t^\sharp)|=\frac{\th }{a(\mu_u(t)/\th)}(\frac{\mu_u(t)}{\theta})^{\frac{n-1}{n}}.
\end{align}
Assembling inequalities \eqref{3.4} and \eqref{3.5}, we get
\begin{align*}
\frac{\th^2 }{a^2(\mu_u(t)/\th)}\mu_u^{\frac{2n-2}{n}}(t)
 \le \int_0^{\mu_u(t)} f^*(s)\, ds \Big(-\mu_u'(t)+ \int_{\p\Omega_t\cap \p \Omega} \frac{1}{\b(x)   u(x)}\, dA\Big),
\end{align*}
which proves inequality \eqref{3.1}. 

If  $v(x)$ is the solution to equation \eqref{1.4}, $v(x)$ is a radial function on $\Omega^\sharp$ and decreasing along the radial direction by inequality \eqref{2.8}, and  $\Omega^\sharp_t$ is a round ball. Therefore all previous inequalities hold as equalities if we replace $u(x)$ by $v(x)$, hence
\begin{align*}
\mu_v^{\frac{2n-2}{n}}(t)=& a^2(\mu_v(t)) \Big(-\mu_v'(t)+ \int_{\p \Omega^\sharp_t\cap\p \Omega^\sharp} \frac{1}{\bar \b v(x)}\, dA\Big)\int_{0}^{\mu_v(t)} (f^\sharp)^*(s)\, ds
\end{align*}
for all $t>0$. Thus we complete the proof of the lemma.
\end{proof}
\begin{lemma}
Suppose $u$ and $v$ are solutions to \eqref{1.3} and \eqref{1.4}. Then both $u$ and $v$ attain their minima on $\p \Omega$ and $\p \Omega^\sharp$. Moreover if we denote by $u_0$ and $v_0$ the minima of $u$ and $v$ respectively, then
\begin{align}\label{3.6}
    u_0\le v_0.
\end{align}
\end{lemma}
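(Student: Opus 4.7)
The plan is to first identify where the minima of $u$ and $v$ are attained, and then to compare them via an integrated form of the two Poisson equations combined with the isoperimetric inequality \eqref{2.6}.

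For $u$, the PDE $-\Delta u = f$ with $f\ge 0$ and $f\not\equiv 0$ makes $u$ superharmonic on $\Omega$, so by the weak minimum principle applied to $u\in C^0(\bar\Omega)$, the minimum over $\bar\Omega$ is attained on $\p\Omega$. For $v$, Proposition \ref{prop2} gives $\vp'(r)<0$ on $(0,R_0)$, so $v$ is strictly radially decreasing on $\bar\Omega^\sharp$ and attains its minimum on $\p\Omega^\sharp$, where it equals the constant $v_0=\vp(R_0)$.

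Next I integrate \eqref{1.3} and \eqref{1.4} over their respective domains. The divergence theorem together with the Robin condition gives
\[
\int_\Omega f\,dx \;=\; -\int_{\p\Omega}\frac{\p u}{\p\nu}\,dA \;=\; \int_{\p\Omega}\b(x)u(x)\,dA,
\]
and, since $v\equiv v_0$ on $\p\Omega^\sharp$,
\[
\int_{\Omega^\sharp} f^\sharp\,dx \;=\; \bar\b\, v_0\,|\p\Omega^\sharp|.
\]
Using the rearrangement identity $\int_\Omega f\,dx = \th\int_{\Omega^\sharp}f^\sharp\,dx$ recorded just after \eqref{2.3}, I combine these to obtain
\[
\int_{\p\Omega}\b(x)u(x)\,dA \;=\; \th\,\bar\b\, v_0\,|\p\Omega^\sharp|.
\]
Bounding the left-hand side from below by $u_0\int_{\p\Omega}\b\,dA$ and substituting the definition \eqref{1.5} of $\bar\b$, I arrive at
\[
u_0\int_{\p\Omega}\b(x)\,dA\cdot\int_{\p\Omega}\frac{1}{\b(x)}\,dA \;\le\; \th^2 v_0\,|\p\Omega^\sharp|^2.
\]
The Cauchy--Schwarz inequality $|\p\Omega|^2 \le \int_{\p\Omega}\b\,dA\cdot\int_{\p\Omega}\b^{-1}\,dA$ now yields $u_0|\p\Omega|^2\le \th^2 v_0|\p\Omega^\sharp|^2$, and a final application of the isoperimetric inequality \eqref{2.6} gives $u_0\le v_0$.

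The only real content is the Cauchy--Schwarz step: pairing $\b$ with $1/\b$ is exactly what reconciles the definition of $\bar\b$ (which involves $\int 1/\b$) with the natural lower bound on $\int \b u$ (which brings in $\int\b$). Everything else — the minimum principle for $u$, the monotonicity of $\vp$ for $v$, the divergence theorem, and the isoperimetric inequality \eqref{2.6} — is used off the shelf.
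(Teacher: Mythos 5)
Your proof is correct and follows essentially the same route as the paper: integrate both PDEs via the divergence theorem, use the rearrangement identity $\int_\Omega f = \th\int_{\Omega^\sharp}f^\sharp$ together with the definition \eqref{1.5} of $\bar\b$, and finish with a Cauchy--Schwarz step on $\p\Omega$ followed by the isoperimetric inequality \eqref{2.6}. The only (cosmetic) difference is the order of operations: the paper applies Cauchy--Schwarz first, writing $\bigl(\int_{\p\Omega}\sqrt{u}\,dA\bigr)^2\le\int_{\p\Omega}\b^{-1}\,dA\cdot\int_{\p\Omega}\b u\,dA$ and then using $u\ge u_0$ to bound $\int\sqrt{u}$, whereas you drop $u$ to $u_0$ first and then apply Cauchy--Schwarz in the form $|\p\Omega|^2\le\int\b\cdot\int\b^{-1}$; the two chains carry identical information.
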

\begin{proof}
Recall that $f$ is nonnegative, then $-\Delta u\ge 0$ and $-\Delta v\ge 0$. Therefore $u$ and $v$ attain their minima on $\p \Omega$ and $\p \Omega^\sharp$. Moreover  $u(x)>u_0$ for $x\in \Omega$, $v(x)>v_0$ for $x\in \Omega^\sharp$ unless $f$ is identically zero.  

Noting that $v(x)=v_0$ on $\p \Omega^\sharp$ and using integration by parts we compute  that
\begin{align*}
    v_0|\p \Omega^\sharp|^2=&\int_{\p \Omega^\sharp} \frac{1}{\bar \b} \,dA\int_{\p \Omega^\sharp}\bar\b v(x)\, dA\\
    =&\int_{\p \Omega^\sharp} \frac{1}{\bar \b} \,dA\int_{\Omega^\sharp} -\Delta v\,dx\\
    =&\frac 1 \th \int_{\p \Omega} \frac{1}{\b(x)} \,dA \int_{\Omega^\sharp} f^\sharp(x) \,dx\\
      =&\frac 1 {\th^2} \int_{\p \Omega} \frac{1}{\b(x)} \,dA \int_{\Omega} -\Delta u(x) \,dx\\
    =&\frac 1 {\th^2}\int_{\p \Omega} \frac{1}{\b(x)} \,dA \int_{\p \Omega} \b(x) u(x) \, dA,
\end{align*}
where we have used equations \eqref{1.3},   \eqref{1.4}, and equality \eqref{1.5}. By the H\"older inequality, it holds
$$
(\int_{\p \Omega} \sqrt{u(x)}\, dA)^2\le \int_{\p \Omega} \frac{1}{\b(x)} \,dA \int_{\p \Omega} \b(x) u(x) \, dA,
$$
then we have
\begin{align*}
    v_0|\p \Omega^\sharp|^2
    \ge \frac 1 {\th^2}(\int_{\p \Omega} \sqrt{u(x)} \,dA)^2\ge  \frac{u_0}{\th^2}|\p \Omega|^2
   \ge  u_0|\p \Omega^\sharp|^2,
\end{align*}
where we used isoperimetric inequality \eqref{2.6} in the last inequality. Thus we have $u_0\le v_0$, proving \eqref{3.6}. 
\end{proof}
Now we turn to prove Theorem \ref{thm1}.
\begin{proof}[Proof of Theorem \ref{thm1}]
Set 
$
\tm_u(t)=\frac{\mu_u(t)}{\th}
$.
Dividing inequality \eqref{3.1} by $\tm_u(t)^{\frac{2}{n}-1}$ and  integrating over $[0,\tau]$, we obtain
\begin{align*}
\theta^2 \int_{0}^{\tau} \tilde{\mu}_u(t) \,d t \leq & \int_{0}^{\tau}\left( a^2(\tilde{\mu}_u(t))\tilde{\mu}_u(t)^{\frac{2}{n}-1}\left(\int_{0}^{\mu_u(t)} f^{*}(s)\, d s\right)\left(-\mu_u^{\prime}(t)\right)\right)\, d t \\
&+\int_{0}^{\tau}  a^2(\tilde{\mu}_u(t))\tilde{\mu}_u(t)^{\frac{2}{n}-1} \int_{0}^{\mu_u(t)} f^{*}(s)\, d s\left(\int_{\p \Omega_t\cap \p \Omega} \frac{1}{\beta(x)} \frac{1}{u(x)} \, d A\right) d t \\
\leq &
\theta^2\int_{0}^{\tau}\left( a^2(\tilde{\mu}_u(t))\tilde{\mu}_u(t)^{\frac{2}{n}-1}
\int_{0}^{\tilde{\mu}_u(t)} (f^\sharp)^{*}(s) \, d s\right)(-d \tilde{\mu}_u(t)) \\
&+\th a^2(|\Omega^\sharp|)|\Omega^\sharp|^{\frac{2}{n}-1} \int_{0}^{|\Omega^\sharp|} (f^{\sharp})^{*}(s)\, d s \int_{0}^{\tau}\left(\int_{\p \Omega_t\cap \p \Omega} \frac{1}{\beta(x)} \frac{1}{u(x)} d A\right)\, d t,
\end{align*}
where we have used 
$$
\mu_u(t)^{\frac 2 n -1}\int_0^{\mu_u(t)} f^*(s)\, ds\le  |\Omega|^{\frac 2 n -1}\int_{\Omega} f(x)\,dx=|\Omega|^{\frac 2 n -1}\int_0^{|\Omega|} f^*(s)\,ds,
$$
which follows from the assumption \eqref{1.6} and the fact $a(s)$ is monotone increasing in $s$. Applying Fubini's theorem, we estimate
\begin{align}\label{3.7}
   \int_0^\tau \int_{\p \Omega_t\cap \p \Omega} \frac{1}{\b (x) u(x)}\, dA\, dt\le   \int_0^\infty \int_{\p \Omega_t\cap \p \Omega} \frac{1}{\b (x) u(x)}\, dA\, dt=\int_{\p \Omega}\frac 1 {\b(x)} \,dA,
\end{align}
then we conclude
\begin{align}\label{3.8}
\theta^2 \int_{0}^{\tau} \tilde{\mu}_u(t)\, d t \le& -\th^2 F(\tm_u(\tau))+\th^2 F(\tm_u(0))\\
&+\theta a^2(|\Omega^\sharp|)|\Omega^\sharp|^{\frac{2}{n}-1} \int_{0}^{|\Omega^\sharp|} (f^{\sharp})^{*}(s) \, d s\int_{\p \Omega}\frac 1 {\b(x)} \,dA,\nonumber
\end{align}
where $F(s)$ is defined by
$$F(s):=\int_{0}^{s} a^2(\sigma)\sigma^{\frac{2}{n}-1} \int_{0}^{\sigma} (f^\sharp)^{*}(r) \,d r d \sigma.$$
Substituting   \eqref{1.5} into  inequality \eqref{3.8} yields
\begin{align}\label{3.9}
\int_{0}^{\tau} \tilde{\mu}_u(t) \,d t \le - F(\tm_u(\tau))+ F(\tm_u(0))+a^2(|\Omega^\sharp|)|\Omega^\sharp|^{\frac{2}{n}-1} \int_{0}^{|\Omega^\sharp|} (f^{\sharp})^{*}(s) \, d s\int_{\p \Omega^\sharp}\frac 1 {\bar \b} \,dA.   
\end{align}
For $\tau>v_0$,  we have $\Omega^\sharp_\tau\subset\subset \Omega^\sharp $, and then equality \eqref{3.2} becomes to
\begin{align*}
 \mu_v^{\frac{2n-2}{n}}(t)= a^2\big(\mu_v(t)\big) \Big(-\mu_v'(t)\Big)\int_{0}^{\mu_v(t)} (f^\sharp)^*(s)\, ds,
\end{align*}
implying 
\begin{align}\label{3.10}
 \int_{0}^{\tau} \mu_v(t) \, d t+F(\mu_v(\tau))= \int_{0}^{v_0} \mu_v(t) \, d t+F(\mu_v(v_0)).  
\end{align}
 Dividing equation \eqref{3.2} by $\mu_v(t)^{\frac{2-n}{n}}$ and integrating over $[0, v_0]$ yields
$$
\int_{0}^{v_0} \mu_v(t) \, d t+F(\mu_v(v_0))=F(\mu_v(0))+a^2(|\Omega^\sharp|)|\Omega^\sharp|^{\frac{2}{n}-1} \int_{0}^{|\Omega^\sharp|} (f^{\sharp})^{*}(s) \, d s\int_{\p \Omega^\sharp}\frac 1 {\bar \b} \,dA, $$
and substituting  above equality to \eqref{3.10}, we obtain for all $\tau>v_0$ that
\begin{align}\label{3.11}
\int_{0}^{\tau} \mu_v(t) \,d t+F(\mu_v(\tau))=&F(\mu_v(0))+a^2(|\Omega^\sharp|)|\Omega^\sharp|^{\frac{2}{n}-1} \int_{0}^{|\Omega^\sharp|} (f^{\sharp})^{*}(s) \, d s\int_{\p \Omega^\sharp}\frac 1 {\bar \b} \,dA.
\end{align}
Combining  inequality \eqref{3.9} and  equality \eqref{3.11}, we  get
\begin{align}\label{3.12}
\int_{0}^{\tau} \tm_u(t) \, d t-\int_{0}^{\tau} \mu_v(t) \, d t \le & -F(\tilde{\mu}_u(t))+F(\mu_v(t))+F(\tilde{\mu}_u(0))-F(\mu_v(0))\\
=& -F(\tilde{\mu}_u(t))+F(\mu_v(t)),\nonumber
\end{align}
where in the equality we used that
\begin{align*}
    \tilde{\mu}_u(0)=\frac{|\Omega|}{\theta}=|\Omega^\sharp|=\mu_v(0).
\end{align*}
Letting $\tau \to \infty$ in \eqref{3.12}, we get
$$
\int_0^\infty \mu_u(t)\, dt\le \int_0^\infty \th \mu_v(t)\, dt,
$$ 
 where we have used $\displaystyle \lim_{\tau \to \infty} \mu_u(\tau)=\lim_{\tau\to \infty}\mu_v(\tau)=0$. Hence \eqref{1.7} holds true.
\end{proof}

Now we deal with the case $n=2$ and $f(x)\equiv 1$.
\begin{proof}[Proof of Theorem \ref{thm2}]
When $n=2$ and $f(x)\equiv 1$, inequality \eqref{3.1}  gives
\begin{align}\label{3.13}
\th \le & \Big(-\mu_u'(t)+\int_{\p \Omega_t \cap \p \Omega}\frac 1 {\b(x)u(x)}\, dA\Big) a^2(\mu_u(t)/\theta)\\
\le & \Big(-\mu_u'(t)+\int_{\p \Omega_t \cap \p \Omega}\frac 1 {\b(x)u(x)}\, dA\Big) a^2(|\Omega^\sharp|),\nonumber 
\end{align}
where we have used the fact that $a(s)$ is monotone increasing in $s$ in the last inequality. 
Integrating  inequality \eqref{3.13}  over $[0, \tau]$ yields
\begin{align}\label{3.14}
\th \tau \le & \Big(-\mu_u(\tau) +|\Omega| +\int_0^\tau \int_{\p \Omega_t\cap \p \Omega}\frac 1 {\b(x) u(x)}\,dA\,dt\Big)a^2(|\Omega^\sharp|)\\
 \le & \Big(-\mu_u(\tau) +|\Omega| +\int_{\p \Omega}\frac 1 {\b(x)} \,dA\Big)a^2(|\Omega^\sharp|),\nonumber
\end{align}
where we used \eqref{3.7} in the last inequality.

Analogously, by using equality \eqref{3.2} we have
\begin{align}\label{3.15}
  \tau = \Big(-\mu_v(\tau) +|\Omega^\sharp| +\int_{\p \Omega^\sharp}\frac 1 {\bar \b} \,dA\Big)a^2(|\Omega^\sharp|)
\end{align}
for $\tau\ge v_0$, then  
putting \eqref{3.14} and \eqref{3.15} together we get
$$
\mu_u(\tau)- \th \mu_v(\tau)\le 0.
$$
For $\tau< v_0$,  it follows clearly that
\begin{align*}
\mu_u(\tau)\le |\Omega|=\th|\Omega^\sharp|= \th\mu_v(\tau),
\end{align*}
thus 
$$
\mu_u(\tau)- \th \mu_v(\tau)\le 0
$$
holds for all $\tau>0$,  hence inequality \eqref{1.8} holds. 

If inequality \eqref{1.8} holds as an equality, then  inequality \eqref{3.14} holds as an equality as well, which implies
$\Omega_\tau\subset\subset \Omega
$, and isoperimetric inequality \eqref{2.6} holds as an equality for $\Omega_\tau$. Hence $\Omega_\tau$ is a geodesic ball in $M_\k$ for $\tau\ge v_0$. So $\Omega$ is a round geodesic ball in $M_\k$, $\th=1$, and $u(x)$ is a radial function. Thus we conclude  $\b(x)$ is a constant on $\p \Omega$ and $M$ is isometric to $M_\k$. We complete the proof of Theorem \ref{thm2}.
\end{proof}

\section*{Acknowledgments} {
We thank Professor Ying Zhang for
his encouragement and support, and
 are grateful to  Professors Daguang Chen and  Carlo Nitsch for some helpful discussions and  comments. We also would like to thank the anonymous referees for catching some typos, which improved the readability of this paper.}

\bibliographystyle{plain}
\bibliography{ref}

\end{document}